\newcommand{\U}{\mathbin{\mathcal{U}\kern-.1em}}
\renewcommand{\S}{\mathbin{\mathcal{S}\kern-.08em}}
\renewcommand{\a}{\alpha}
\renewcommand{\b}{\beta}
\newcommand{\mf}{\mathfrak}
\newcommand{\mc}{\mathcal}
\theoremstyle{plain}
\newtheorem{proposition}{Proposition}[section]
\newtheorem{theorem}[proposition]{Theorem}
\newtheorem{lemma}[proposition]{Lemma}
\theoremstyle{definition}
\newtheorem{definition}[proposition]{Definition}
\newtheorem{corollary}[proposition]{Corollary}
 \theoremstyle{definition}
 \theoremstyle{remark}
 \numberwithin{equation}{section}
\begin{document}

%
%
%
%
%
%
%
%
%

\title[]
 {Curvature tensors and hyperbolic solitons on Lorentzian trans-Sasakian space form }

\author{Bidhan Mondal}

\address{%
Department of Mathematics, Jadavpur University}

\email{bidhanmondal381@gmail.com}

\author{Nirabhra Basu}

\address{%
Department of Mathematics, Bhawanipur Education Society College}

\email{basu.nirabhra@gmail.com}
\author{Arindam Bhattacharyya}

\address{%
Department of Mathematics, Jadavpur University}

\email{arindambhat16@gmail.com}

\subjclass{Primary 53C50; Secondary 53D25}

\keywords{Lorentzian trans-Sasakian space form, Hyperbolic Ricci soliton, Hyperbolic conformal Ricci soliton.}

\dedicatory{}

\begin{abstract}
    Lorantzian trans-Sasakian space form is a special type of space form in which the nature of even and odd dimensional space form both exist. Various curvature tensors with respect to Levi-Civita connection on the space form are derived in this paper. We have shown that if an odd-dimensional Lorentzian trans-Sasakian space form admits a hyperbolic Ricci soliton and hyperbolic conformal Ricci soliton then they will be $\eta-$Einstein. We also obtained the conditions for the solitons to be expanding, steady, or shrinking. 
Finally, an example has been constructed which justifies the results obtained.
\end{abstract}

\maketitle
\section{Introduction}

In differential geometry, the Lorentzian metric is a very fascinating and important topic. In 1989, Matsumoto \cite{MK} introduced Lorentzian quasi-paracontact manifolds. After that several authors discussed different classes of Lorentzian quasi-paracontact manifolds, viz, Lorentzian Sasakian manifold, Lorentzian $\a-$ Sasakian manifold and Lorentzian $\b-$ Kemotsu manifold \cite{ITM},\cite{YAM},\cite{YAA}. In 2011, S. S. Pujar initiated the study of $\delta-$Lorentzin $\b-$Kanmotsu manifolds and $\delta-$Lorentzin $\a-$Sasakian manifolds \cite{PS}. Before these study, S. S. Pujar also introduced the notion of Lorentzian trans-Sasakian manifolds and some of its properties in \cite{PSS}. In 2013, U. C. De and K. De have found some properties of curvatures in Lorentzian trans-Sasakian manifolds \cite{UKD}.\\
If $M$ is a Lorentzian manifold then it has a Lorentzian metric $g$ which is a non-degenerate symmetric $(0,2)$ tensor field of index $1$. For index 1, the Lorentzian manifold $M$ contains not only spacelike vector fields, but also
 timelike and lightlike vector fields. A differentiable manifold $M$ admits a Lorentzian metric if and only if $M$ has a $1-$dimensional distribution. Inspired by the previous results, Bhati \cite{BSM} developed the notion of $\delta-$Lorentzian trans-Sasakian manifolds.\\
 
  In 1984, Hamilton introduced one of the most significant geometric flows, the Ricci flow \cite{CBL}. In recent years, there has been a significant increase in the study of various types of solitons as stationary solutions of various geometric flows with the goal of determining the geometrical properties of a soliton, particularly those related to its curvature and establishing obstacles for a manifold to be a soliton. The hyperbolic geometric flow is one of these geometric flows; it is a system of second-order nonlinear partial differential equations and is very close to the wave equation. It was first introduced by Kong and Liu \cite{KL} in the year 2007 to characterize the wave phenomenon of metrics and curvatures of Riemannian manifolds. The general version of hyperbolic geometric flow for the metric $g$ is given by 
\begin{equation}\label{eq001}
    \frac{\partial^2 g}{\partial t^2}+2S+\mc{F}\left(g,\frac{\partial g}{\partial t}\right)=0, \quad g(0) = g_0,
\end{equation}
where $S$ is the Ricci tensor and $\mc{F}$ is a given smooth function of the Riemannian metric
$g$ and its first order derivative with respect to $t$. Three most important special cases of this general version are \textit{standard hyperbolic geometric flow} or \textit{ hyperbolic geometric
flow}, the \textit{Einstein’s hyperbolic geometric flow} and the \textit{dissipative hyperbolic geometric flow}. In this present paper we consider the
standard hyperbolic geometric flow on a Riemann manifold associated with its Ricci curvature tensor $S$ given by: 
\begin{equation}\label{eq001}
    \frac{\partial^2 g}{\partial t^2}+2S=0\quad g(0) = g_0,\quad \frac{\partial g}{\partial t}\bigg|_{t_{0}} = k_0,
\end{equation}
where $k_0$ is a symmetric 2-tensor field on the Riemannian manifold.\\
\begin{definition}
     A Riemannian manifold $(M, g)$ is said to admit hyperbolic Ricci soliton if there exists a vector field $V$ on $M$ and real scalars $\mu$ and $\lambda$ satisfying  
     \begin{equation}
         \mc{L}_V(\mc{L}_Vg)+2\lambda\mc{L}_Vg+2S=2\mu g,
     \end{equation}
where $\mc{L}$ is the Lie derivative along the vector field $V$.
\end{definition}
It was introduced by Hamed Faraji, Shahroud Azami and Ghodratallah Fasihi-Ramandi in 2023 \cite{FHS}. The solitons are expanding, steady or shrinking according as $\lambda>0, \lambda=0$ or $\lambda <0$ respectively. When $V = 0$ or $V$ is a Killing vector field, then the manifold reduces to Einstein manifold.\\
In 2004, A. E. Fischer \cite{FA} introduced a new type of geometry flow, which preserves the scalar curvature of the evolving metrics. Because the role of conformal geometry in maintaining scalar curvature constraint, it is named conformal Ricci flow.\\
In an $n-$dimensional manifold $M$ $(n\geq 3)$, the conformal Ricci flow equation on a smooth closed connected manifold is given by
\begin{equation}\nonumber
    \frac{\partial g}{\partial t}+2(S+\frac{g}{n})=-pg\quad and \quad r(g)=-1,
\end{equation}
where $p$ is a non dynamical (time dependent) scalar field and $r(g)$ is the scalar curvature of the manifold. The term $-pg$ acts as the constraint force to maintain the scalar curvature constraint. These evolution equations are analogous to famous Navier-Stokes
equations in Fluid dynamics, where the constraint is divergence free. For this reason $p$ is also called conformal pressure.\\
In 2015, N. Basu and A. Bhattacharyya  introduced \cite{ABNBl} conformal Ricci soliton and have \cite{NAB} its existence recently. The conformal Ricci soliton equation was given by \\
$$\mc{L}_Vg+2S=2[\lambda -\frac{1}{2}(p+\frac{2}{2n+1})] g,$$
where $\lambda$ is constant and $p$ is the conformal pressure.\\

These studies make us interested to study Lorentzian trans-Sasakian manifolds and we have found some properties of curvature tensors on it. In section 3, these properties of curvature tensors have been discussed. These properties and related results help us to construct the expression of Riemannian and Ricci curvatures of Lorentzian trans-Sasakian space form. In section 4 contains the study of hyperbolic Ricci soliton and its geometry. We have included an example which satisfies the related results which have been discuss in section 5. In section 6 contains the study of  hyperbolic conformal Ricci flow, soliton, and its existence.

\section{Preliminaries}
An $(2n+1)-$dimensional manifold $M$ has an almost contact structure if it admits a vector field $\xi$, a tensor field $\varphi$ of type (1,1) and a 1-form $\eta$ satisfying 
\begin{equation}\label{e1}
\eta(\xi)=1,\ \varphi^2=-I+\eta\otimes\xi,\ \varphi(\xi)=0,\ 	\eta\circ\varphi=0.
\end{equation}
\cite{G}A Lorentzian metric $g$ is said to be compatible with the structure $(\varphi, \xi, \eta, g)$ if 
\begin{equation}\label{e01111}
g(\varphi X,\varphi Y)=g(X,Y)+\eta(X)\eta(Y).
\end{equation}
If the manifold $M^{2n+1}$ equipped with an almost-contact structure $(\varphi, \xi, \eta, g)$ and a compatible Lorentzian metric $g$, it is
called a Lorentzian almost-contact metric manifold.
Note that equations (\ref{e1}) and (\ref{e01111}) imply 
\begin{equation}\label{e02}
g(X,\xi) =-\eta(X) \quad \text{and}\quad  g(\xi,\xi) =-1.
\end{equation}
Also, equations (\ref{e01111}) implies 
\begin{equation}\label{e03}
g(X,\varphi Y ) = -g (\varphi X, Y ).
\end{equation}
Let us the four tensors $N^{(1)}$, $N^{(2)}$, $N^{(3)}$ and $N^{(4)}$ in almost contact manifold, which are defined by 
$$N^{(1)}(X,Y)=[\varphi,\varphi](X,Y)+2d\eta(X,Y)\xi,$$
$$N^{(2)}(X,Y)=(\mc{L}_{\varphi X}\eta)Y-(\mc{L}_{\varphi Y}\eta)X,\qquad\ $$
$$N^{(3)}(X)=(\mc{L}_{\xi}\varphi)X,$$
$$N^{(4)}(X)=(\mc{L}_{\xi}\eta)X.$$
An almost contact manifold is normal if and only if $N^{(1)}=N^{(2)}=N^{(3)}=N^{(4)}=0$.\\
In a Lorentzian almost contact metric manifold $(M^{2n+1},\varphi, \xi, \eta, g)$, the fundamental 2-form $\Phi$ is defined as
$$\Phi(X, Y) = g(X,\varphi Y)\quad \text{for all}\quad X, Y \in\mf{X}(M).$$
In 1985, Oubi\~{n}a \cite{OJA} proved that an almost contact metric manifold $(M^{2n+1},\\ \varphi, \xi, \eta, g)$ is trans-Sasakian if and only if it is normal and 
\begin{equation}\label{e04}
d\Phi=2\b\eta\wedge\Phi,\quad d\eta=\a\Phi.
\end{equation}
In a Lorentzian almost trans-Sasakian manifold $(M,\varphi, \xi, \eta, g,\a,\b)$ where $\a$ and $\b$ are smooth function on $M$, we have  
\begin{equation}\label{e2}
	(\nabla_X\varphi)Y=\a[\eta(Y)X+g(X,Y)\xi]-\b[\eta(Y)\varphi X+g(\varphi X,Y)\xi],
	\end{equation}
 \begin{equation}\label{e10}
\nabla_X\xi=\a\varphi X +\b(X-\eta(X)\xi),
\end{equation}
\begin{equation}\label{e11}
(\nabla_X\eta)Y=\a g( X,\varphi Y)-\b g(\varphi X, \varphi Y).
\end{equation}
A Lorentzian trans-Sasakian manifold $M$ of constant $\varphi$-holomorphic sectional curvature is called a Lorentzian trans-Sasakian space form.

 \section{Lorentzian trans-Sasakian space form }
 
In this section, we have first derived the expression of curvature tensor on Lorentzian trans-Sasakian space forms with respect to Levi-Civita connection.
\begin{theorem}
    In a Lorentzian trans-Sasakian Manifold $(M,\varphi, \xi, \eta, g,\a,\b)$ (with $\a,\b$ constants) of curvature tensor $R$, 
    \begin{equation}\label{tcr1}
        R(X,Y) \xi=(\a^2-\b^2)[\eta(Y)X-\eta(X)Y]+2\a\b[\eta(X)\varphi Y-\eta(Y)\varphi X].
    \end{equation}
\end{theorem}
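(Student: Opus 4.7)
The plan is to start from the standard definition of the Riemannian curvature tensor,
\begin{equation*}
R(X,Y)\xi=\nabla_X\nabla_Y\xi-\nabla_Y\nabla_X\xi-\nabla_{[X,Y]}\xi,
\end{equation*}
and then systematically substitute in (\ref{e10}) and differentiate once more, using (\ref{e2}) and (\ref{e11}) whenever the covariant derivative lands on $\varphi$ or $\eta$. Since $\a$ and $\b$ are constants, they commute freely with $\nabla$, so this is purely an exercise in bookkeeping.

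First I would compute $\nabla_X\nabla_Y\xi$. From (\ref{e10}), $\nabla_Y\xi=\a\varphi Y+\b Y-\b\eta(Y)\xi$, so applying $\nabla_X$ and the Leibniz rule yields
\begin{equation*}
\nabla_X\nabla_Y\xi=\a(\nabla_X\varphi)Y+\a\varphi(\nabla_X Y)+\b\nabla_X Y-\b(\nabla_X\eta)(Y)\,\xi-\b\eta(\nabla_X Y)\xi-\b\eta(Y)\nabla_X\xi.
\end{equation*}
Now I substitute the expressions for $(\nabla_X\varphi)Y$ and $(\nabla_X\eta)Y$ from (\ref{e2}) and (\ref{e11}), and replace the last $\nabla_X\xi$ using (\ref{e10}) again. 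Swapping $X\leftrightarrow Y$ gives $\nabla_Y\nabla_X\xi$.

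Next I would form the difference $\nabla_X\nabla_Y\xi-\nabla_Y\nabla_X\xi$ and observe that the three pieces involving $\nabla_XY$ and $\nabla_YX$, namely $\a\varphi(\nabla_XY-\nabla_YX)$, $\b(\nabla_XY-\nabla_YX)$, and $-\b\eta(\nabla_XY-\nabla_YX)\xi$, telescope to exactly $\nabla_{[X,Y]}\xi$ by the torsion-free property of $\nabla$ and the definition (\ref{e10}). Subtracting $\nabla_{[X,Y]}\xi$ thus eliminates all derivative-of-derivative terms, leaving only purely tensorial contributions in $X$, $Y$, $\varphi X$, $\varphi Y$, $\eta(X)$, $\eta(Y)$, and $\xi$.

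Finally I would collect these remaining terms by coefficient type. The $\a^{2}$ and $\b^{2}$ pieces assemble into $(\a^{2}-\b^{2})[\eta(Y)X-\eta(X)Y]$, while the $\a\b$ pieces proportional to $\varphi X,\varphi Y$ combine to $2\a\b[\eta(X)\varphi Y-\eta(Y)\varphi X]$. The $\a\b$ coefficients of $\xi$ reduce to a sum of four mixed terms of the form $g(\varphi X,Y)$, $g(X,\varphi Y)$, and their antisymmetric counterparts, and the skew-symmetry relation (\ref{e03}) forces these to cancel in pairs; symmetry of $g$ likewise kills the $\b^{2}g(\varphi X,\varphi Y)\xi$ contributions after antisymmetrization, as do the two $\b^{2}\eta(X)\eta(Y)\xi$ terms. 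I expect the main obstacle to be purely organizational: managing the roughly dozen terms on each side without sign errors, and in particular exploiting (\ref{e03}) at the right moment so that all $\xi$-components other than the trivial zero vanish, yielding the claimed identity (\ref{tcr1}).
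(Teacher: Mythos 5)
Your proposal is correct and follows essentially the same route as the paper: expand $R(X,Y)\xi$ from the definition, substitute (\ref{e10}) and apply the Leibniz rule so that the $\nabla_XY$, $\nabla_YX$ terms telescope against $\nabla_{[X,Y]}\xi$, then use (\ref{e2}) and (\ref{e11}) and cancel the $\xi$-components via (\ref{e03}). The paper's proof is exactly this computation carried out explicitly.
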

\begin{proof}
    From the definition of curvature tensor, we have
         $$
             R(X,Y) Z=\nabla_X\nabla_Y Z-\nabla_Y\nabla_X Z-\nabla_{[X,Y]} Z.
         $$
         Replacing $Z$ by $\xi$, we get
         $$
             R(X,Y) \xi=\nabla_X\nabla_Y \xi-\nabla_Y\nabla_X \xi-\nabla_{[X,Y]} \xi.
         $$
Using (\ref{e10}), we obtain
\begin{eqnarray*}
    R(X,Y) \xi=\a(\nabla_X\varphi) Y-\a(\nabla_Y\varphi) X +\b[(\nabla_Y\eta)X]\xi\\ -\b[(\nabla_X\eta)Y]\xi+\b\eta(X)\nabla_Y\xi 
-\b\eta(Y)\nabla_X\xi.
\end{eqnarray*}
Using (\ref{e2}), (\ref{e10}) and (\ref{e11}), we get
\begin{eqnarray*}
    R(X,Y) \xi=\a^2\eta(Y)X+\a^2 g(X,Y)\xi-\a\b\eta(Y)\varphi X-\a\b g(\varphi X,Y)\xi\\-\a^2\eta(X)Y-\a^2g(X,Y)\xi+\a\b\eta(X)\varphi Y+\a\b g(\varphi Y,X)\xi\\+\a\b g( Y,\varphi X)\xi-\b^2 g(\varphi X, \varphi Y)\xi  -\a\b g( X,\varphi Y)\xi\\+\b^2 g(\varphi X, \varphi Y)\xi+\a\b\eta(X)\varphi Y +\b^2\eta(X)Y-\b^2\eta(X)\eta(Y)\xi\\-\a\b\eta(Y)\varphi X -\b^2\eta(Y)X+\b^2\eta(Y)\eta(X)\xi.
\end{eqnarray*}
Simplifying,
$$R(X,Y) \xi=(\a^2-\b^2)[\eta(Y)X-\eta(X)Y]+2\a\b[\eta(X)\varphi Y-\eta(Y)\varphi X].$$
\end{proof}

\begin{corollary}
     In a Lorentzian trans-Sasakian manifold $(M,\varphi, \xi, \eta, g,\a,\b)$ (with $\a,\b$ constants) of curvature tensor $R$, 
    \begin{equation}\label{tcr2}
        R(\xi,X)Y=-(\a^2-\b^2)[\eta(Y)X+g(X,Y)\xi]-2\a\b[\eta(Y)\varphi X-g(X,\varphi Y)\xi].
    \end{equation}
\end{corollary}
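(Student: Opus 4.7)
The cleanest route is to exploit the pair symmetry $g(R(U,V)W,Z)=g(R(W,Z)U,V)$ of the Riemann tensor together with the identity just proved. Specifically, the pair symmetry gives
\begin{equation*}
g(R(\xi, X)Y, Z) = g(R(Y, Z)\xi, X)
\end{equation*}
for every vector field $Z$, so the whole computation reduces to substituting the formula $(\ref{tcr1})$ from the preceding theorem into the right-hand side and then rearranging each term into the shape $g(\,\cdot\,,Z)$.

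After substituting $(\ref{tcr1})$ with $(Y,Z)$ playing the role of $(X,Y)$, the right-hand side expands into four terms, each of the form $\eta(\cdot)\, g(\,\cdot\,,\,\cdot\,)$ or $\eta(\cdot)\, g(\varphi\,\cdot\,,\,\cdot\,)$. I would then apply two elementary rewrites: the identity $\eta(Z) = -g(Z,\xi)$ from $(\ref{e02})$, to turn stray $\eta(Z)$ factors into inner products with $Z$; and the skew-adjointness $g(\varphi U, V) = -g(U,\varphi V)$ from $(\ref{e03})$, to move $\varphi$ off whichever argument contains $Z$. Since $Z$ is arbitrary, non-degeneracy of $g$ then lets me read off $R(\xi,X)Y$ directly from the resulting expression. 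A final use of $g(Y, \varphi X) = -g(X, \varphi Y)$ casts the $2\a\b$-term into the precise form stated in the corollary.

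The main obstacle is purely the bookkeeping of signs. The $(\a^2-\b^2)$-terms are immediate, but the $2\a\b$-terms require a swap inside $\varphi$ combined with the negative from $\eta=-g(\,\cdot\,,\xi)$, which together produce several sign flips and are the natural place to err. An alternative derivation via the first Bianchi identity $R(\xi,X)Y + R(X,Y)\xi + R(Y,\xi)X = 0$ is possible, but it would also require deriving $R(Y,\xi)X$ separately, so it is not shorter than the symmetry route sketched above.
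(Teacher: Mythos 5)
Your proposal is correct and follows essentially the same route as the paper: both derivations take the inner product of the formula for $R(X,Y)\xi$ with a fourth vector field and invoke the pair symmetry $g(R(U,V)W,Z)=g(R(W,Z)U,V)$ of the Riemann tensor, then use $\eta(\cdot)=-g(\cdot,\xi)$ and the skew-adjointness of $\varphi$ to read off $R(\xi,X)Y$ by non-degeneracy of $g$. The sign bookkeeping you flag works out exactly as you describe, reproducing the stated identity.
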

\begin{proof}
    Taking inner product of (\ref{tcr1}) with $Z$, we obtain
       $$R(\xi,Z)X=-(\a^2-\b^2)[\eta(X)Z+g(X,Z)\xi]-2\a\b[\eta(X)\varphi Z-g(\varphi X,Z)\xi].$$
    Replacing $Z$ by $X$ and $X$ by $Y$, we have
    $$R(\xi,X)Y=-(\a^2-\b^2)[\eta(Y)X+g(X,Y)\xi]-2\a\b[\eta(Y)\varphi X-g(X,\varphi Y)\xi].$$
\end{proof}

\begin{lemma}
	Let $(M,\varphi,\xi,\eta,g,\a,\b)$ be a trans-Sasakian manifold of type\\ $(\a,\b)$ and $R$ its curvature tensor. Then for vector fields $X,Y,Z$ orthogonal to $\xi$, we obtain the following expressions,
    
     1.\begin{equation}\label{t01}
        \begin{split}
            R(X,Y)\varphi Z-\varphi R(X,Y)Z=\a^2[g(Y, Z )\varphi X-g(X, Z )\varphi Y+ g( X,\varphi Z)Y\\- g( Y,\varphi Z)X]
             +2\a\b[ g(Y, Z )X-g(X, Z )Y+g(Y,\varphi Z)\varphi X\\-  g( X,\varphi Z)\varphi Y ]
             +\b^2[g( Y, \varphi Z )X-g( X, \varphi Z )Y+ g( X, Z)\varphi Y\\- g( Y, Z)\varphi X].
        \end{split}
        \end{equation}
     2. \begin{equation}\label{t02}
            \begin{split}
                  R(\varphi X,\varphi Y) Z- R(X,Y)Z=\a^2[g(Y,Z)X-g(X,Z)Y+g( Z,\varphi X)\varphi Y\\- g(Z,\varphi Y)\varphi X]
                  +2\a\b[ g( Y,Z)\varphi X -g(X, Z )\varphi Y+ g(Z,\varphi Y)X\\-  g( Z,\varphi X)Y ]+\b^2[ g( X, Z)Y- g( Z, Y)X+g(Z,\varphi Y)\varphi X\\-g( Z,\varphi X )\varphi Y].
            \end{split}
        \end{equation}
     3.\begin{equation}\label{t03}
            \begin{split}
                R(X,Y,\varphi X,\varphi Y)- R(X,Y,X,Y)=(\a^2-\b^2)[g( X, Y)^2-g(X, X )g( Y, Y)\\+g(X,\varphi Y)^2].
            \end{split}
        \end{equation}
     4.\begin{equation}\label{t04}
            \begin{split}
                R(X,\varphi X,Y,\varphi Y)=R(X,\varphi Y,Y,\varphi X)+R(X, Y,X,Y)+(\a^2-\b^2)[g( X, Y)^2\\-g(X, X )g( Y, Y)+g(X,\varphi Y)^2].
            \end{split}
        \end{equation}
     5.\begin{equation}\label{t05}
            \begin{split}
                R(X,\varphi Y,X,\varphi Y)- R(X,\varphi Y,Y,\varphi X)=-(\a^2-\b^2)[g(X, Y )^2+ g( X,\varphi Y)^2\\-g(X, X) g( Y, Y)].
            \end{split}
        \end{equation}
     6.\begin{equation}\label{t06}
            \begin{split}
                            R(Y,\varphi X,Y,\varphi X)- R(X,\varphi Y,Y,\varphi X)=-(\a^2-\b^2)[g( X, Y)^2+g( X,\varphi Y)^2\\ -g( X, X)g(Y,Y)].
            \end{split} 
        \end{equation}
\end{lemma}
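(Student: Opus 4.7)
The plan is to establish identity (\ref{t01}) by direct computation from the definition of the curvature tensor, and then to deduce (\ref{t02})--(\ref{t06}) as algebraic consequences using the symmetries of $R$, the first Bianchi identity, and the structure identities (\ref{e01111})--(\ref{e03}).

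For (\ref{t01}), I would expand
\begin{equation*}
R(X,Y)\varphi Z = \nabla_X\nabla_Y(\varphi Z) - \nabla_Y\nabla_X(\varphi Z) - \nabla_{[X,Y]}(\varphi Z)
\end{equation*}
via the Leibniz rule $\nabla_U(\varphi W) = (\nabla_U\varphi)W + \varphi\nabla_U W$. The $\varphi\nabla\nabla$ contributions cancel against $\varphi R(X,Y)Z$, leaving terms that involve $(\nabla\varphi)$ applied to $\nabla$-images together with a second covariant derivative of $\varphi$. Substituting (\ref{e2}) for each $(\nabla_U\varphi)W$, applying (\ref{e10})--(\ref{e11}) to simplify $\nabla_U\xi$ and $(\nabla_U\eta)W$, and using $\eta(X)=\eta(Y)=\eta(Z)=0$ throughout, the compatibility (\ref{e01111}) and skew-symmetry (\ref{e03}) collapse the result to the stated form.

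Identity (\ref{t02}) follows from (\ref{t01}) by applying $\varphi$ to both sides and using $\varphi^2=-I$ on the $\xi$-orthogonal subspace; equivalently, one reruns the same derivation with $X\to\varphi X$, $Y\to\varphi Y$. Identity (\ref{t03}) is then obtained from (\ref{t02}) by setting $Z=X$ and pairing with $Y$: the pair-swap symmetry $R(A,B,C,D)=R(C,D,A,B)$ identifies $g(R(\varphi X,\varphi Y)X,Y)$ with $R(X,Y,\varphi X,\varphi Y)$, while most terms on the right-hand side disappear via $g(X,\varphi X)=0$ or combine into the single $(\a^2-\b^2)$ multiple.

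For (\ref{t04}), I would apply the first Bianchi identity to $R(X,\varphi X)Y + R(\varphi X,Y)X + R(Y,X)\varphi X = 0$, pair with $\varphi Y$, and use the skew- and pair-swap symmetries together with (\ref{t03}) to rearrange the resulting scalar equation into (\ref{t04}). Identity (\ref{t05}) is obtained from (\ref{t03}) by the substitution $Y\mapsto\varphi Y$, which produces $\varphi^2 Y = -Y$ and $g(\varphi Y,\varphi Y)=g(Y,Y)$ on the $\xi$-orthogonal subspace; a sign flip via the skew-symmetry in the last two slots of $R$ then yields (\ref{t05}). Finally, (\ref{t06}) is obtained from (\ref{t05}) by swapping $X\leftrightarrow Y$ and applying pair-swap symmetry to match the mixed terms. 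The principal obstacle is the bookkeeping in (\ref{t01}): roughly a dozen $\a^2$-, $\a\b$-, and $\b^2$-coefficient terms arise, along with $\xi$-valued contributions that must cancel in pairs; once (\ref{t01}) is secured, (\ref{t02})--(\ref{t06}) follow by routine manipulation.
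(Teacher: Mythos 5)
The paper states this lemma without any proof, so your sketch has to stand on its own; I have checked it accordingly. The overall architecture is sound. The Ricci-identity computation you propose for (\ref{t01}) does close up: writing $R(X,Y)\varphi Z-\varphi R(X,Y)Z$ as the antisymmetrized second covariant derivative of $\varphi$ and substituting (\ref{e2})--(\ref{e11}), the $\xi$-valued contributions cancel in pairs exactly as you predict, \emph{provided} $\a$ and $\b$ are constant --- a hypothesis the lemma's statement omits but which the subsequent theorems impose and which the identity genuinely needs, since otherwise terms in $d\a$ and $d\b$ survive. Your derivations of items 3--6 from items 1--2 (set $Z=X$ and contract with $Y$; Bianchi identity contracted with $\varphi Y$; the substitutions $Y\mapsto\varphi Y$ and $X\leftrightarrow Y$ together with $g(X,\varphi X)=0$ and $g(\varphi X,\varphi Y)=g(X,Y)$ on $\xi^{\perp}$) all verify.

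The one step that would fail as written is your passage from (\ref{t01}) to (\ref{t02}). Applying $\varphi$ to both sides of (\ref{t01}) turns the left side into $\varphi R(X,Y)\varphi Z+R(X,Y)Z-\eta(R(X,Y)Z)\xi$, i.e., it yields an identity for $\varphi R(X,Y)\varphi Z$, not for $R(\varphi X,\varphi Y)Z$; and rerunning the derivation with $X\to\varphi X$, $Y\to\varphi Y$ yields $R(\varphi X,\varphi Y)\varphi Z-\varphi R(\varphi X,\varphi Y)Z$, again not the difference $R(\varphi X,\varphi Y)Z-R(X,Y)Z$. Neither operation moves $\varphi$ out of the third slot of $R$ into the first two. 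The correct mechanism --- which you do invoke later for item 3 --- is the pair-swap symmetry of the $(0,4)$-tensor: contract (\ref{t01}) with an arbitrary $W$ and use $g(\varphi A,B)=-g(A,\varphi B)$ to get $R(X,Y,\varphi Z,W)+R(X,Y,Z,\varphi W)=G(X,Y,Z,W)$; rewrite the left side as $R(\varphi Z,W,X,Y)+R(Z,\varphi W,X,Y)$; then substitute $W\mapsto\varphi W$ and use $\varphi^{2}=-I$ and $g(\varphi\cdot,\varphi\cdot)=g(\cdot,\cdot)$ on $\xi^{\perp}$ to obtain $R(\varphi Z,\varphi W,X,Y)-R(Z,W,X,Y)=G(X,Y,Z,\varphi W)$, which after relabelling is exactly (\ref{t02}). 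With that repair the whole chain goes through.
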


\begin{theorem}
    Let $(M,\varphi,\xi,\eta,g,\a,\b,c)$ be a Lorentzian trans-Sasakian space form, where $\a,\b$ are constants and $c$ is constant which derived form $\varphi$-sectional curvature. Then the curvature tensor $R$ of $M$ is
$$4R(X, Y)Z=[3(\a^2-\b^2)-c][g(X, Z )Y-g( Y,Z)X]+(\a^2-\b^2+c)[\eta(Z)\{\eta(Y)X$$$$\quad\qquad-\eta(X)Y\}+\{\eta(Y)g(X, Z )-\eta(X)g( Y,Z)\}\xi+g( X,\varphi Z)\varphi Y$$$$\quad-g( Y,\varphi Z)\varphi X +2 g( X,\varphi Y)\varphi Z]+8\a\b[\{\eta(X)g(Y,\varphi Z)$$
    \begin{equation}\label{teq13}
-\eta(Y)g(X,\varphi Z)\}\xi+\eta(Z)\{\eta(X)\varphi Y-\eta(Y)\varphi X\}].
\end{equation}

\end{theorem}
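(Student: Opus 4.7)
The plan is to follow the classical derivation of the curvature tensor of a Sasakian (or K\"ahler) space form, adapted to the Lorentzian trans-Sasakian setting. By hypothesis the $\varphi$-holomorphic sectional curvature is constant equal to $c$, that is $R(X,\varphi X,\varphi X,X)=c\,[g(X,X)]^{2}$ for every unit $X$ orthogonal to $\xi$. The target formula (\ref{teq13}) splits naturally into a horizontal piece, built from $g$ and $\varphi$ on the distribution $\mathcal{D}=\{\xi\}^{\perp}$, and a vertical piece involving $\eta$ and $\xi$. I would derive the two pieces separately and then assemble them.

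First, on $\mathcal{D}$, I would polarise the defining identity by the successive substitutions $X\mapsto X+Y$ and $X\mapsto X+tY$, and then expand using the algebraic symmetries of $R$ (pair skew-symmetry, pair interchange, first Bianchi). This produces identities among $R(X,Y,Z,W)$, $R(X,Y,\varphi Z,\varphi W)$, and $R(X,\varphi Y,Z,\varphi W)$. At this stage the Lemma identities (\ref{t03})--(\ref{t06}) are exactly what is needed to reduce every $\varphi$-twisted component to a combination of $g$, $\varphi$, and $c$. Collecting the resulting terms yields the $\mathcal{D}$-restriction of (\ref{teq13}), namely the coefficient $[3(\a^{2}-\b^{2})-c]$ on $g(X,Z)Y-g(Y,Z)X$ and the $\varphi$-part of $[\a^{2}-\b^{2}+c]$.

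Second, I would extend the formula to arbitrary vector fields. Any $X$ decomposes uniquely as $X=X^{\mathcal{D}}-\eta(X)\xi$, with $X^{\mathcal{D}}=X+\eta(X)\xi\in\mathcal{D}$, and similarly for $Y$ and $Z$. Substituting these decompositions into $R(X,Y)Z$ and expanding by multilinearity gives the horizontal piece already computed together with a sum of terms in which at least one argument equals $\xi$. All such terms are reducible via equations (\ref{tcr1}) and (\ref{tcr2}). Grouping like contributions in $g$, $\eta\otimes\xi$, $\eta\otimes\varphi$, and $\varphi\otimes\eta$ supplies the remaining ingredients of (\ref{teq13}); in particular, the crossed term $8\a\b[\ldots]$ arises precisely from the $2\a\b$ piece of (\ref{tcr1}) being combined with the various vertical components produced by decomposing $X$, $Y$, and $Z$.

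The main obstacle is computational rather than conceptual. The polarisation step generates a large collection of terms whose signs are sensitive to the Lorentzian convention $g(\xi,\xi)=-1$, and one must invoke (\ref{t03})--(\ref{t06}) in precisely the right linear combinations so that every $\varphi$-twisted curvature component is converted cleanly into expressions in $g$ and $\varphi$. Matching the integer coefficients of (\ref{teq13})---the overall factor of $4$ on the left, the coefficient $2$ inside $2\,g(X,\varphi Y)\varphi Z$, and especially the coefficient $8$ of $\a\b$---demands meticulous bookkeeping of each contribution, and is where most of the work of the proof will actually lie.
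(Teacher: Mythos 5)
Your proposal is correct and follows essentially the same route as the paper: polarize the constant $\varphi$-sectional curvature identity on vector fields orthogonal to $\xi$, reduce the $\varphi$-twisted curvature components via the identities (\ref{t01})--(\ref{t06}) together with the first Bianchi identity to obtain the horizontal part of (\ref{teq13}), and then pass to arbitrary vector fields by the $\eta$-decomposition, absorbing all terms with a $\xi$-argument through (\ref{tcr1}) and (\ref{tcr2}). You also correctly locate the origin of the $8\a\b$ term in the $2\a\b$ pieces of those two equations, which is exactly how it arises in the paper's computation.
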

\begin{proof}
	For any vector field $X$ and $Y$ orthogonal to $\xi$, we have 
	\begin{equation}\label{a01}
	R(X,\varphi X,X,\varphi X)=-c[g(X,X)]^2.
	\end{equation}
	Replacing $X$ by $X+Y$ in (\ref{a01}), we have
$$2R(Y,\varphi X,X,\varphi X)+2R(X,\varphi X,X,\varphi Y)+2R(Y,\varphi Y,X,\varphi X)+2R(Y,\varphi Y,Y,\varphi X)$$
$$+2R(X,\varphi Y,Y,\varphi X)+2R(X,\varphi Y,Y,\varphi Y)+R(X,\varphi Y,X,\varphi Y)+R(Y,\varphi X,Y,\varphi X)$$
\begin{equation}\label{a02}
    =-c[4g(X,Y)^2+4g(X,X)g(X,Y)+2g(X,X)g(Y,Y)+4g(X,Y)g(Y,Y)].
\end{equation}
Replacing $X$ by $X-Y$ in (\ref{a01}), we get
$$-2R(X,\varphi X,X,\varphi Y)-2R(X,\varphi X,Y,\varphi X)+2R(X,\varphi X,Y,\varphi Y)-2R(Y,\varphi X,Y,\varphi Y)$$
$$+2R(X,\varphi Y,Y,\varphi X)-2R(X,\varphi Y,Y,\varphi Y)+R(Y,\varphi X,Y,\varphi X)+R(X,\varphi Y,X,\varphi Y)$$
\begin{equation}\label{a03}
    =-c[4g(X,Y)^2-4g(X,X)g(X,Y)-4g(X,Y)g(Y,Y)+2g(X,X)g(Y,Y)]
\end{equation}
Adding (\ref{a02}) and (\ref{a03}), we have
$$2R(X,\varphi X,Y,\varphi Y)+2R(X,\varphi Y,Y,\varphi X)+R(X,\varphi Y,X,\varphi Y)+R(Y,\varphi X,Y,\varphi X)$$
 \begin{equation}\label{a04}
     =-2c[2g(X,Y)^2+g(X,X)g(Y,Y)].
 \end{equation}
Using (\ref{t01}), (\ref{t02}), (\ref{t03}) in (\ref{a04}), we get

 \begin{equation}\label{a05}
   3R(X, \varphi Y,Y,\varphi X)+R(X, Y,X,Y)=-c[2g(X,Y)^2+g(X,X)g(Y,Y)].
\end{equation}
Replacing $Y$ by $\varphi Y$ in (\ref{a05}), we get
$$3R(X, Y,\varphi X,\varphi Y)+R(X,\varphi Y,X,\varphi Y) =-c[2g(X,\varphi Y)^2+g(X,X)g( Y, Y)].$$
Using (\ref{t02}) and (\ref{t04}) in preceding equation, we get
\begin{eqnarray}\label{t06}
    3 R(X,Y,X,Y)+R(X,\varphi Y,Y,\varphi X)+2(\a^2-\b^2)[g( X, Y)^2\nonumber\\-g(X, X )g( Y, Y)+g(X,\varphi Y)^2]=-c[2g(X,\varphi Y)^2+g(X,X)g( Y, Y)].
\end{eqnarray}
Multiplying (\ref{t06}) by 3 and then subtracting (\ref{a05}), we obtain
\begin{eqnarray}\label{a07}
    4 R(X,Y,X,Y)=-[3(\a^2-\b^2)-c]g(X, Y )^2-3[(\a^2-\b^2)+c] g( X,\varphi Y)^2\nonumber\\+[3(\a^2-\b^2)-c]g(X, X) g( Y, Y).\quad\quad\quad\quad
\end{eqnarray}
Replacing $X$ by $X+Z$ in (\ref{a07}) and simplifying, we get

\begin{equation}\label{a08}
    4R(X, Y)Y=[3(\a^2-\b^2)-c][g(X, Y )Y-g( Y, Y)X]+3[(\a^2-\b^2)+c]g( X,\varphi Y)\varphi Y.
\end{equation}
Again replacing $Y$ by $Y+Z$, in (\ref{a08}) and after simplifying, we get
$$4R(X, Z)Y+4R(X, Y)Z=[3(\a^2-\b^2)-c][g(X, Z )Y+g(X, Y )Z-2g( Y, Z)X]$$
\begin{equation}\label{a09}
   \qquad\qquad\qquad\qquad\quad +3[(\a^2-\b^2)+c][g( X,\varphi Z)\varphi Y+g( X,\varphi Y)\varphi Z].
\end{equation}
Replacing $X$ by $Y$ and $Y$ by $-X$, we get
$$4R(X, Y)Z+4R(Z, Y)X=[3(\a^2-\b^2)-c][-g(Y, Z )X-g(X, Y )Z+2g( X, Z)Y]$$
\begin{equation}\label{a010}
   \qquad\qquad\qquad\qquad\quad +3[(\a^2-\b^2)+c][-g( Y,\varphi Z)\varphi X-g( Y,\varphi X)\varphi Z].
\end{equation}
Adding (\ref{a09}) and (\ref{a010}), we get
\begin{eqnarray*}
    8R(X, Y)Z+4\{R(X, Z)Y+R(Z, Y)X\}=3[3(\a^2-\b^2)-c][g(X, Z )Y\\-g( Y, Z)X]+3[(\a^2-\b^2)+c][g( X,\varphi Z)\varphi Y\\+2g( X,\varphi Y)\varphi Z-g( Y,\varphi Z)\varphi X].
\end{eqnarray*}
Using Bianchi's identity in above relation, we have
\begin{eqnarray}\label{a011}
    4R(X, Y)Z=[3(\a^2-\b^2)-c][g(X, Z )Y-g( Y, Z)X]\nonumber\\+(\a^2-\b^2+c)[g( X,\varphi Z)\varphi Y+2g( X,\varphi Y)\varphi Z-g( Y,\varphi Z)\varphi X].
\end{eqnarray}
Now, let $X,$ $Y$ and $Z$ be arbitrary
vectors fields. Then we can write
$$X=X'-\eta(X')\xi, Y=Y'-\eta(Y')\xi\ \text{and}\ Z=Z'-\eta(Z')\xi,$$
where $X',$ $Y'$ and $Z'$ are orthogonal to $\xi$. Then from (\ref{a011}), we have
\begin{eqnarray*}
    4R(X', Y')Z'-4\eta(Y')R(X', \xi)Z'-4\eta(X')R(\xi, Y')Z'+4\eta(X')\eta(Y')R(\xi,\xi)Z'\\ -4\eta(Z')R(X',Y')\xi+4\eta(Y')\eta(Z')R(X',\xi)\xi+4\eta(X')\eta(Z')R(\xi,Y')\xi\\ -4\eta(X')\eta(Y')\eta(Z')R(\xi, \xi)\xi=[3(\a^2-\b^2)-c][g(X',Z')Y'-g(Y',Z')X'\\ +\eta(X')\eta(Z')Y'\eta(Y')\eta(Z')X'+\eta(X')g(Y',Z')\xi-\eta(Y')g(X',Z' )\xi]\\+(\a^2-\b^2+c)[g(X',\varphi Z')\varphi Y'-g(Y',\varphi Z')\varphi X'+2g(X',\varphi Y')\varphi Z'].
\end{eqnarray*}
Using (\ref{tcr1}) and (\ref{tcr2}) we get
\begin{eqnarray*}
    4R(X',Y')Z'=[3(\a^2-\b^2)-c][g(X',Z')Y'-g(Y',Z')X']\\+(\a^2-\b^2+c)[\eta(Z')\{\eta(Y')X'-\eta(X')Y'\}+\{\eta(Y')g(X',Z')-\eta(X')g(Y',Z')\}\xi\\+g(X',\varphi Z')\varphi Y'-g(Y',\varphi Z')\varphi X' +2g( X',\varphi Y')\varphi Z']+8\a\b[\{\eta(X')g(Y',\varphi Z')\\-\eta(Y')g(X',\varphi Z')\}\xi+\eta(Z')\{\eta(X')\varphi Y'-\eta(Y')\varphi X'\}].
\end{eqnarray*}
\end{proof}
Observe that after contraction of the above equation, we obtain 
\begin{eqnarray*}\label{teq14}
          S(Y,Z)=\frac{nc-(3n-4)(\a^2-\b^2)}{2}g(Y,Z)+\frac{n(\a^2-\b^2+c)}{2}\eta(Y)\eta(Z)\\+2\a\b g(Y,\varphi Z).\qquad\qquad\qquad\qquad\qquad\qquad\qquad\qquad\qquad
     \end{eqnarray*}
\begin{corollary}\label{cr1}
     Let $(M,\varphi,\xi,\eta,g,\a,\b,c)$ be a Lorentzian trans-Sasakian space form, where $\a,\b$ are constants and $c$ denotes $\varphi$-sectional curvature constant. Then the above expression is the Ricci curvature tensor $S$ of $M$  
     
\end{corollary}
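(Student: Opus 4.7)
The corollary asserts that the Ricci tensor $S$ is obtained by contracting the curvature formula (\ref{teq13}) of Theorem~3.3, so my plan is to carry out this contraction explicitly in a pseudo-orthonormal frame adapted to the Lorentzian structure. I would fix a local frame $\{e_1,\dots,e_{2n},\xi\}$ with $\epsilon_i=g(e_i,e_i)$, where $\epsilon_i=1$ for $1\le i\le 2n$ and $\epsilon_{2n+1}=-1$, and apply the definition $S(Y,Z)=\sum_{i=1}^{2n+1}\epsilon_i\,g(R(e_i,Y)Z,e_i)$ to the formula $4R(X,Y)Z$ supplied by Theorem~3.3.

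Before summing, I would collect the trace identities that the computation will repeatedly invoke. From the completeness relation $A=\sum_i\epsilon_i\,g(A,e_i)e_i$ one obtains $\sum_i\epsilon_i g(A,e_i)g(B,e_i)=g(A,B)$ and $\sum_i\epsilon_i g(e_i,e_i)=2n+1$. Writing $\xi=-\sum_i\epsilon_i\eta(e_i)e_i$ (using $g(\xi,e_i)=-\eta(e_i)$) gives $\sum_i\epsilon_i\eta(e_i)g(A,e_i)=\eta(A)$ and, in particular, $\sum_i\epsilon_i\eta(e_i)^2=g(\xi,\xi)=-1$. By (\ref{e03}) one has $g(\varphi e_i,e_i)=0$, so the trace of $\varphi$ vanishes, and $\sum_i\epsilon_i g(e_i,\varphi Z)g(\varphi Y,e_i)=g(\varphi Y,\varphi Z)=g(Y,Z)+\eta(Y)\eta(Z)$ by (\ref{e01111}).

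With these in hand I would substitute $X=e_i$ in (\ref{teq13}), pair with $e_i$, weight by $\epsilon_i$, and sum block by block. The block with coefficient $[3(\a^2-\b^2)-c]$ contracts at once to $-2n[3(\a^2-\b^2)-c]g(Y,Z)$. The block with coefficient $(\a^2-\b^2+c)$ decomposes into five natural subterms: the pair $\eta(Z)\{\eta(Y)X-\eta(X)Y\}$ yields a multiple of $\eta(Y)\eta(Z)$ through the dimension identity together with $\sum_i\epsilon_i\eta(e_i)g(Y,e_i)=\eta(Y)$; the $\xi$-valued piece $\{\eta(Y)g(X,Z)-\eta(X)g(Y,Z)\}\xi$ produces the factor $-1$ via $\sum_i\epsilon_i\eta(e_i)^2=-1$; the pieces $g(X,\varphi Z)\varphi Y$ and $2g(X,\varphi Y)\varphi Z$ each generate a $g(\varphi Y,\varphi Z)$-type contribution which is then expanded into $g(Y,Z)+\eta(Y)\eta(Z)$; and the piece $-g(Y,\varphi Z)\varphi X$ drops out because $\operatorname{tr}(\varphi)=0$. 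The block with coefficient $8\a\b$ collapses to $8\a\b\,g(Y,\varphi Z)$: its $\eta(Y)g(X,\varphi Z)\xi$ piece is the only surviving $\varphi$-dependent contribution after applying $\sum_i\epsilon_i\eta(e_i)^2=-1$, while the remaining pieces vanish either via $\eta\circ\varphi=0$ or via the tracelessness of $\varphi$.

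Adding the three blocks and dividing by $4$ produces an expression of the form $S(Y,Z)=A\,g(Y,Z)+B\,\eta(Y)\eta(Z)+C\,g(Y,\varphi Z)$, and matching coefficients with the displayed formula completes the proof. The real obstacle is the book-keeping in the Lorentzian setting: one has to track carefully the sign contributions of the timelike direction $\xi$ (through $g(\xi,e_i)=-\eta(e_i)$ and $\sum_i\epsilon_i\eta(e_i)^2=-1$) and to convert each $\varphi$-dependent inner product into the correct combination of $g$- and $\eta\otimes\eta$-terms before assembling the final coefficients.
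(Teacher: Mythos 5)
Your method is exactly the one the paper intends: the paper's entire ``proof'' of this corollary is the sentence ``Observe that after contraction of the above equation, we obtain\dots'', and you are simply carrying out that contraction $S(Y,Z)=\sum_{i}\epsilon_i\,g(R(e_i,Y)Z,e_i)$ in a pseudo-orthonormal frame. Your preparatory trace identities are all correct in the Lorentzian setting ($\sum_i\epsilon_i\eta(e_i)^2=g(\xi,\xi)=-1$, $\sum_i\epsilon_i g(e_i,\varphi Z)g(\varphi Y,e_i)=g(\varphi Y,\varphi Z)=g(Y,Z)+\eta(Y)\eta(Z)$, tracelessness of $\varphi$), and your block-by-block accounting of which pieces survive is accurate: the first block gives $-2n[3(\a^2-\b^2)-c]\,g(Y,Z)$, the $8\a\b$ block collapses to $8\a\b\,g(Y,\varphi Z)$, and in the middle block only the $-g(Y,\varphi Z)\varphi X$ piece dies.

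The step that fails is the last one, ``matching coefficients with the displayed formula.'' If you actually assemble your blocks over a $(2n+1)$-dimensional frame you get
\begin{equation*}
4S(Y,Z)=-2n[3(\a^2-\b^2)-c]\,g(Y,Z)+(\a^2-\b^2+c)\bigl[2g(Y,Z)+(2n+2)\eta(Y)\eta(Z)\bigr]+8\a\b\,g(Y,\varphi Z),
\end{equation*}
i.e.
\begin{equation*}
S(Y,Z)=\frac{(n+1)c-(3n-1)(\a^2-\b^2)}{2}\,g(Y,Z)+\frac{(n+1)(\a^2-\b^2+c)}{2}\,\eta(Y)\eta(Z)+2\a\b\,g(Y,\varphi Z),
\end{equation*}
whereas the expression the corollary asks you to reproduce has coefficients $\frac{nc-(3n-4)(\a^2-\b^2)}{2}$ and $\frac{n(\a^2-\b^2+c)}{2}$. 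Those are precisely what one obtains by summing over a $(2n-1)$-dimensional frame, so the paper's displayed Ricci tensor is inconsistent with the stated dimension $2n+1$ (and, for $n=1$, $\a=0$, $\b=-1$, $c=1$, it predicts $S\equiv 0$, contradicting the paper's own example where $S(e_2,e_2)=2$). Your write-up should not assert that the coefficients match; either record the coefficients you actually obtain and flag the discrepancy with the stated formula, or verify independently which normalization of the frame sum the authors intended. As it stands, the proposal proves a correct contraction formula but not the literal statement of the corollary.
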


\section{Hyperbolic Ricci Soliton:}
In this section, we derive the expression of the second order Lie derivative and also find the nature of the hyperbolic Ricci soliton on various conditions.   
\begin{theorem}
     If a $(2n+1)$-dimensional Lorentzian trans-Sasakian space form $(M, g,\varphi,\eta,\xi,c,\a,\b)$ with $\b\ne 0$, admits hyperbolic Ricci soliton then $M$ is $\eta-$Einstein and $\lambda =\frac{1}{4\b}\{2(n-1)(\a^2-\b^2)+\mu\}-\b $. Moreover the soliton is expanding, steady and shrinking according as $\mu>2\{(n+1)\b^2-(n-1)\a^2\},$ $\mu=2\{(n+1)\b^2-(n-1)\a^2\}$ and $\mu<2\{(n+1)\b^2-(n-1)\a^2\}$ respectively. 
\end{theorem}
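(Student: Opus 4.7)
The plan is to choose $V = \xi$ in the hyperbolic Ricci soliton equation and reduce both Lie-derivative tensors to combinations of $g$ and $\eta\otimes\eta$ using the trans-Sasakian structure equations, then read off $S$ and compare with the Ricci tensor supplied by Corollary~\ref{cr1}.

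For the first derivative, substitute $\nabla_X\xi = \alpha\varphi X + \beta(X - \eta(X)\xi)$ from (\ref{e10}) into $(\mathcal{L}_\xi g)(X, Y) = g(\nabla_X\xi, Y) + g(X, \nabla_Y\xi)$; the two $\alpha$-contributions combine to $\alpha[g(\varphi X, Y) + g(X, \varphi Y)]$ and cancel by (\ref{e03}), leaving
\[
(\mathcal{L}_\xi g)(X, Y) = 2\beta\bigl[g(X, Y) + \eta(X)\eta(Y)\bigr].
\]
For the second derivative, I use the identity $(\mathcal{L}_\xi h)(X, Y) = (\nabla_\xi h)(X, Y) + h(\nabla_X\xi, Y) + h(X, \nabla_Y\xi)$ with $h := \mathcal{L}_\xi g$. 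Specialising (\ref{e11}) to its first argument $\xi$ gives $(\nabla_\xi \eta)(X) = 0$, so $\nabla_\xi h = 0$; expanding the remaining two terms and cancelling the $\alpha$-contributions a second time via (\ref{e03}) yields
\[
\mathcal{L}_\xi(\mathcal{L}_\xi g)(X, Y) = 4\beta^2\bigl[g(X, Y) + \eta(X)\eta(Y)\bigr].
\]

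Substituting both identities into the hyperbolic Ricci soliton equation and solving for $S$ produces
\[
S(X, Y) = \bigl[\mu - 2\beta^2 - 2\lambda\beta\bigr]\,g(X, Y) \;-\; \bigl(2\beta^2 + 2\lambda\beta\bigr)\,\eta(X)\eta(Y),
\]
which is precisely the $\eta$-Einstein conclusion. To pin down $\lambda$ in terms of $\mu$, I evaluate this at $X = Y = \xi$, using $g(\xi,\xi) = -1$ and $\eta(\xi) = 1$, to obtain $S(\xi, \xi) = -\mu$, and compare with the intrinsic value $S(\xi, \xi) = 2(n-1)(\alpha^2 - \beta^2)$ read off from Corollary~\ref{cr1}; combining this identification with the $\eta\otimes\eta$-coefficient equation and dividing by $4\beta$ (which is permissible because $\beta \neq 0$) gives the stated formula. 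The expanding/steady/shrinking trichotomy then reduces to pure algebra: rearranging $\lambda > 0$ in the formula for $\lambda$ is equivalent to $\mu > 2\{(n+1)\beta^2 - (n-1)\alpha^2\}$, and the remaining two cases follow identically with equality and the reversed inequality.

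The principal calculational obstacle is the second Lie derivative, where the Leibniz-type expansion, the vanishing of $\nabla_\xi \eta$ from (\ref{e11}), and the cancellation of the $\alpha$-contributions from (\ref{e03}) must be assembled in the correct order; moreover, in the Lorentzian setting the sign $g(\xi,\xi) = -1$ must be tracked carefully, since a single slip propagates through the $S(\xi,\xi)$ comparison and breaks the derivation of $\lambda$. Once these two tensor identities are in hand, the remainder of the argument is routine algebra.
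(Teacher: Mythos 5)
Your overall route is the same as the paper's: put $V=\xi$, compute $\mc{L}_\xi g$ and $\mc{L}_\xi(\mc{L}_\xi g)$ from $\nabla_X\xi=\a\varphi X+\b(X-\eta(X)\xi)$, solve the soliton equation for $S$, and compare with the Ricci tensor of the space form. Your two Lie-derivative identities are in fact correct: since $g(\xi,Y)=-\eta(Y)$, one gets $(\mc{L}_\xi g)(X,Y)=2\b[g(X,Y)+\eta(X)\eta(Y)]$ and $\mc{L}_\xi(\mc{L}_\xi g)=4\b^2[g+\eta\otimes\eta]$ with a \emph{plus} sign; this sign is forced by $\nabla_\xi\xi=0$, which makes $(\mc{L}_\xi g)(\xi,\xi)=0$, and it is what the example of Section 5 actually exhibits, namely $(\mc{L}_Vg)(e_3,e_3)=0$. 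The paper's own proof instead works with $(\mc{L}_\xi g)(X,Y)=2\b[g(X,Y)-\eta(X)\eta(Y)]$.

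The gap is in your last step. Because $g+\eta\otimes\eta$ vanishes whenever one of its arguments is $\xi$, every $\lambda$-dependent term in your identity $S=(\mu-2\b^2-2\lambda\b)g-(2\b^2+2\lambda\b)\eta\otimes\eta$ cancels upon contraction with $\xi$: you correctly obtain $S(\xi,\xi)=-\mu$, and more generally $S(X,\xi)=-\mu\,\eta(X)$, which together with $S(X,\xi)=2(n-1)(\a^2-\b^2)\eta(X)$ from Corollary~\ref{cr1} yields only the constraint $\mu=-2(n-1)(\a^2-\b^2)$ --- no information about $\lambda$ whatsoever. The ``$\eta\otimes\eta$-coefficient equation'' you propose to combine with it reads $-(2\b^2+2\lambda\b)=\tfrac{n}{2}(\a^2-\b^2+c)$ and determines $\lambda$ in terms of $c$, not of $\mu$; it reproduces the stated $\lambda=\frac{1}{4\b}\{2(n-1)(\a^2-\b^2)+\mu\}-\b$ only in the degenerate case $c=\b^2-\a^2$, where both sides collapse to $-\b$. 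So the asserted formula for $\lambda$ does not follow from the identities you derived. The paper reaches that formula precisely because its minus-signed version of $\mc{L}_\xi g$ does not annihilate $\xi$, so the $\lambda$-terms survive the substitution $Y=\xi$; with the sign you (correctly) computed, that derivation is unavailable and your proof as written does not close.
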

\begin{proof}
    From the well-know Lie derivative property 
$$(\mc{L}_\xi g)(X,Y)=g(\nabla_X\xi,Y)+g(X,\nabla_Y\xi),$$
we get,
\begin{equation}\label{e01}
    (\mc{L}_\xi g)(X,Y)=2\b (g(X,Y)-\eta(X)\eta(Y)).
\end{equation}
Taking Lie derivative along $\xi$ of the above, we acquire 
\begin{equation}\label{e02}
    (\mc{L}_\xi(\mc{L}_\xi g))(X,Y)=\mc{L}_\xi((\mc{L}_\xi g)(X,Y))-(\mc{L}_\xi g)(\mc{L}_\xi X,Y)-(\mc{L}_\xi g)(X,\mc{L}_\xi Y).
\end{equation}
Now
\begin{eqnarray}\label{e03}
    \mc{L}_\xi((\mc{L}_\xi g)(X,Y))=4\b^2(g(X,Y)-\eta(X)\eta(Y))+2\b(g(\mc{L}_\xi X,Y)\nonumber\\+g(X,\mc{L}_\xi Y)-\mc{L}_\xi(\eta(X)\eta(Y))
\end{eqnarray}
and
\begin{equation}\label{e04}
    (\mc{L}_\xi g)(\mc{L}_\xi X,Y)=2\b(g(\mc{L}_\xi X,Y)-\eta(\mc{L}_\xi X)\eta(Y))
\end{equation}
similarly,
\begin{equation}\label{e05}
    (\mc{L}_\xi g)(X,\mc{L}_\xi Y)=2\b(g(X,\mc{L}_\xi Y)-\eta(\mc{L}_\xi Y)\eta(X)).
\end{equation}
Again, we find
     $$\eta(\mc{L}_\xi X)=g(\nabla_\xi X,\xi)-g(\nabla_X\xi,\xi).$$ 
 Thus,
 \begin{equation}\label{e06}
     \eta(\mc{L}_\xi X)\eta(Y)+\eta(\mc{L}_\xi Y)\eta(X)=\mc{L}_\xi(\eta(X)\eta(Y)).
 \end{equation}
Therefore, applying (\ref{e01}), (\ref{e03}), (\ref{e04}), (\ref{e05}) and (\ref{e06}) to (\ref{e02}), we get
\begin{equation}
   \mc{L}_\xi((\mc{L}_\xi)(X,Y))=4\b^2(g(X,Y)-\eta(X)\eta(Y)).
\end{equation}
Now from the equation of hyperbolic Ricci soliton, we get
\begin{equation}\label{e011}
    2\b^2(g(X,Y)-\eta(X)\eta(Y))+2\lambda\b(g(X,Y)-\eta(X)\eta(Y))+S(X,Y)=\mu g(X,Y)
\end{equation}
which, by plugging $Y=\xi$, we get
\begin{equation}\label{e012}
    S(X,\xi)=2(2\b^2+\lambda\b)\eta(X)-\mu\eta(X).
\end{equation}
Using this result in (\ref{e011}), we obtain
$$\lambda=\frac{1}{4\b}\{2(n-1)(\a^2-\b^2)+\mu\}-\b.$$
From (\ref{e011}) we can conclude that Lorentzian trans-Sasakian space form admitting hyperbolic Ricci soliton is $\eta-$Einstein.
The  soliton is expanding, if
\begin{equation}
    \mu>2\{(n+1)\b^2-(n-1)\a^2\},
\end{equation}
 steady, if
\begin{equation}
      \mu=2\{(n+1)\b^2-(n-1)\a^2\},
\end{equation}
and shrinking, if
\begin{equation}
      \mu<2\{(n+1)\b^2-(n-1)\a^2\}.
\end{equation}
\end{proof}

\section{Example:}
Let us define a three-dimensional manifold $M=\{(x,y,z)\in\mathbb{R}^3,y\neq 0\}$, where $(x,y,z)$ are the standard co-ordinate in $\mathbb{R}^3$. The vector field as defined below,

$\quad\quad\quad\quad\quad\quad\quad\quad e_1=e^{z}\frac{\partial}{\partial x} \quad\quad
e_2=e^{z}\frac{\partial}{\partial y} \quad\quad e_3=\frac{\partial}{\partial z}$,\\
are linearly independent at each point of $M$. We define the Riemannian metric $g$ as
\begin{equation}
     g(e_1,e_1)=g(e_2,e_2)=1,\quad g(e_3,e_3)=-1,\quad g(e_1,e_2)=g(e_1,e_3)=g(e_2,e_3)=0.
\end{equation}
Let $\eta$ be the $1-form$ defined by $\eta(X)=-g(X,\xi)$ for any $X\in\mathfrak{X}(M)$ and $\varphi$ be the $(1,1)$ tensor field defined by 
\begin{equation}
    \varphi(e_1)=e_2, \quad \varphi(e_2)=-e_1, \quad \varphi(e_3)=0.
\end{equation}
Then using the linearity of $\varphi$ and $g$, we have
$$ \eta(\xi)=1,\quad  \varphi^2X=-X+\eta(X)\xi,   \quad g(\varphi X,\varphi Y)=g(X,Y)+\eta(X)\eta(Y),$$
for all $ X,Y\in TM.$ Then for $\xi=e_3$, the structure $(\varphi,\xi,\eta,g)$ defines as Lorentzian structure on $M$. Now, from the definition of the Lie bracket, we found the result
$$[e_1,e_2]=0,\quad[e_2,e_3]=-e_2,\quad[e_1,e_3]=-e_1.$$
Let $\nabla$ be the Levi-Civita connetion of the Riemannian metric $g$ defined by the Koszul's formula which is given by
\begin{eqnarray*}
    2g(\nabla_XY,Z)=Xg(Y,Z)+Yg(X,Z)-Zg(Y,X)-g(X,[Y,Z])\\-g(Y,[X,Z])+g(Z,[X,Y])
\end{eqnarray*}
Using the above formula, we can easily calculate the followings: 
$$\nabla_{e_1}e_1=-e_3\quad\quad\quad \nabla_{e_1}e_2=0\quad\quad\quad\nabla_{e_1}e_3=-e_1$$
$$\nabla_{e_2}e_1=0 \quad\quad\nabla_{e_2}e_2=-e_3 \quad\quad\nabla_{e_2}e_3=-e_2$$
$$\nabla_{e_3}e_1=0 \quad\quad\nabla_{e_3}e_2=0 \quad\quad\nabla_{e_3}e_3=0.$$
It verifies all conditions of the Lorentzian trans-Sasakian manifold. Hence $M$ became a trans-Sasakian manifold of type $(0,-1)$. Now taking the above results and using the formula of the Reimannian curvature tensor $R$, that is $R(X,Y)Z=\nabla_X\nabla_YZ-\nabla_Y\nabla_XZ-\nabla_{[X,Y]}Z$, we get the components of Riemanian curvature tensors as given by \\
$$R(e_1,e_2)e_3=0\quad\quad R(e_2,e_3)e_1=0\quad\quad R(e_1,e_3)e_1=-e_3$$
$$R(e_1,e_2)e_2=e_1\quad\quad R(e_2,e_3)e_2=-e_3\quad\quad R(e_1,e_3)e_3=-e_1$$
$$R(e_1,e_2)e_1=-e_2\quad\quad R(e_2,e_3)e_3=-e_2\quad\quad R(e_1,e_3)e_2=0$$
and the components of Ricci tensors are given by\\
$$S(e_1,e_1)=0\quad\quad\quad S(e_2,e_2)=2\quad\quad\quad S(e_3,e_3)=-2.$$
Hence $M$ is a generalized Lorentzian trans-Sasakian space form with $\phi-$sectional curvature $c=1$.
Let us define a vector field $V$ and putting $V=\xi$. Then we get\\
$$(\mc{L}_Vg)(e_1,e_1)=-2\quad\quad (\mc{L}_Vg)(e_2,e_2)=-2\quad\quad (\mc{L}_Vg)(e_3,e_3)=0,$$
and
$$\mc{L}((\mc{L}_Vg))(e_1,e_1)=4\quad\quad \mc{L}((\mc{L}_Vg))(e_2,e_2)=4\quad\quad \mc{L}((\mc{L}_Vg))(e_3,e_3)=0.$$
Using the above results in the hyperbolic Ricci soliton, we find the relation $\lambda=1+\frac{\mu}{4}$, which verifies the theorem.

\section{ Hyperbolic Conformal Ricci Soliton:}
In this section, we shall introduce a new concept hyperbolic conformal Ricci flow and its soliton. After that we shall prove its existence and derive the condition of hyperbolic conformal Ricci soliton, to expanding, srinking, and steady on an $(2n+1)$-dimensional Lorentzian trans-Sasakian space form with respect to the Levi-Civita connection.\\
Now we introduce the notion of the hyperbolic conformal Ricci flow equation as an evolution equation given by:
\begin{equation}
    \frac{\partial^2g}{\partial t^2}=-[2S+(p+\frac{2}{n})g],
\end{equation}
for $g$, a Riemannian metric on an $n-$dimensional smooth manifold $M$.
\begin{definition}\label{de01}
     A Riemannian manifold $(M,g)$ is said to admit hyperbolic conformal Ricci soliton if there exists a vector field $V$ on $M$ and real scalars $\mu$ and $\lambda$ such that
     \begin{equation}\label{eq006}
         \mc{L}_V(\mc{L}_Vg)+2\lambda\mc{L}_Vg+2S=2(\mu-\frac{1}{2}(p+\frac{2}{n})) g.
     \end{equation}
\end{definition}\noindent
The soliton is said to be expanding, steady or shrinking according as $\lambda$ is positive, zero or negative respectively.\\
Now we shall show that such Riemannian manifold exists in the following lemma.
\begin{lemma}
    Hyperbolic conformal Ricci solitons are self similar solutions of the Hyperbolic conformal Ricci flow.
\end{lemma}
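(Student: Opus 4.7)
The plan is to produce an explicit one-parameter family $g(t)$ of metrics, generated from the soliton data $(g_{0}, V, \lambda, \mu)$, which solves the hyperbolic conformal Ricci flow and reduces to $g_{0}$ at $t = 0$. Following the classical Hamilton-type construction, let $\phi_{t}$ denote the one-parameter group of diffeomorphisms generated by $V$, and propose the self-similar ansatz $g(t) = \sigma(t)\,\phi_{t}^{*} g_{0}$ for a smooth positive function $\sigma(t)$ with $\sigma(0) = 1$.

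The first step is to differentiate the ansatz twice in $t$. Using the basic identity $\tfrac{d}{dt}\phi_{t}^{*}\omega = \phi_{t}^{*}(\mc{L}_{V}\omega)$ for any tensor field $\omega$, one obtains
\begin{equation*}
\frac{\partial^{2} g}{\partial t^{2}} = \sigma''(t)\,\phi_{t}^{*} g_{0} + 2\sigma'(t)\,\phi_{t}^{*}(\mc{L}_{V} g_{0}) + \sigma(t)\,\phi_{t}^{*}(\mc{L}_{V}\mc{L}_{V} g_{0}).
\end{equation*}

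The next step is to eliminate the term $\mc{L}_{V}\mc{L}_{V} g_{0}$ via the defining soliton equation (\ref{eq006}). Because the Ricci tensor is invariant under constant rescalings of the metric and natural under pullback by diffeomorphisms, we have $\phi_{t}^{*} S(g_{0}) = S(\phi_{t}^{*} g_{0}) = S(g(t))$. After substitution, the right-hand side of $\partial^{2} g/\partial t^{2}$ becomes a linear combination of $\phi_{t}^{*} g_{0}$, $\phi_{t}^{*}(\mc{L}_{V} g_{0})$, $S(g(t))$, and a conformal-pressure term built from $p$ and $\tfrac{2}{n}$.

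Finally, one matches the resulting expression against the target $-\left[2S(g(t)) + (p + \tfrac{2}{n})g(t)\right]$. Equating the coefficient of $S(g(t))$ fixes a scaling normalization for $\sigma$, while equating the coefficients of $\phi_{t}^{*} g_{0}$ and $\phi_{t}^{*}(\mc{L}_{V} g_{0})$ yields compatible ODEs for $\sigma(t)$ consistent with the initial data $\sigma'(0) = \lambda$ and $\sigma''(0) = -2\mu$ dictated by the soliton. I expect the main obstacle to be precisely this coefficient-matching: reconciling the scale-invariance of $S$ with the scale-dependence of the $(p + \tfrac{2}{n})g$ term may force $p(t)$ to be chosen as a specific time-dependent scalar field. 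This is admissible since, in the conformal Ricci flow framework, $p$ is a non-dynamical time-dependent field determined by the constraint $r(g) = -1$, and exploiting this freedom completes the construction of the self-similar solution.
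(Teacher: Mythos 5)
Your proposal follows essentially the same route as the paper: the self-similar ansatz $g(t)=\sigma(t)\,\phi_t^*g_0$, two time-differentiations using $\tfrac{d}{dt}\phi_t^*\omega=\phi_t^*(\mc{L}_V\omega)$, substitution of the soliton identity to eliminate $\mc{L}_V\mc{L}_Vg_0$, and matching against the flow equation with $\sigma(0)=1$, $\sigma'(0)=\lambda$, $\sigma''(0)=-2\mu$ (the paper takes $\sigma(t)=1+\lambda t-\mu t^2$ and a rescaled generating field). The coefficient-matching obstacle you flag --- the scale invariance of $S$ versus the scale dependence of the $(p+\tfrac{2}{n})g$ term --- is genuine and is in fact glossed over in the paper, which simply freezes $\sigma$ at $t=0$; your suggestion to absorb the discrepancy into the non-dynamical time-dependent field $p(t)$ is the honest way to close the argument.
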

\begin{proof}
    We consider a family of Riemannian metrics
    \begin{equation}\label{ab01}
        \Bar{g}(t)=\sigma(t)\psi_t^*(g(t)),
    \end{equation}
    where $\psi_t^*$ is a 1-parameter family of deffeomorphism function from $M$ to $M$.
    Now differentiating both side with respect to $t$, we get
    \begin{eqnarray*}
        \frac{\partial\Bar{g}}{\partial t}=\sigma'(t)\psi_t^*(g(t))+\sigma(t)\psi_t^*(\frac{\partial g}{\partial t})+\sigma(t)\psi_t^*(\mc{L}_Vg),
    \end{eqnarray*}
    where $V$ is a vector field depend on $t$ on $M$.
    Again, differentiating both side, we have
    \begin{align}\label{ab02}
        \frac{\partial^2\Bar{g}}{\partial t^2}=\sigma''(t)\psi_t^*(g(t))+2\sigma'(t)\psi_t^*(\frac{\partial g}{\partial t})+2\sigma'(t)\psi_t^*(\mc{L}_Vg)+\sigma(t)\psi_t^*(\frac{\partial^2 g}{\partial t^2})\nonumber\\+\sigma(t)\psi_t^*(\mc{L}_V\frac{\partial g}{\partial t})+\sigma(t)\psi_t^*(\frac{\partial (\mc{L}_Vg)}{\partial t})+\sigma(t)\psi_t^*(\mc{L}_V(\mc{L}_V)g).
    \end{align}
    Now, given a metric $g_0$, a vector vield $Y$ and real scalar $\mu~and~\lambda$ satisfies
    \begin{align}\label{ab03}
                 \mc{L}_Y(\mc{L}_Yg_0)+2\lambda\mc{L}_Yg_0-2\mu g_0=-[2S+(p+\frac{2}{n}) g_0].
    \end{align}
    Putting $g(t)=g_0,~\sigma(t)=1+\lambda t-\mu t^2$ and integrating t-dependent vector field $V(t)=\frac{1}{\sigma(t)}Y$, to give a family of diffeomorphisms $\psi_t^*~with~\psi_0$. From equation (\ref{ab02}) the metric $\Bar{g}$ defined by $\Bar{g_0}$, then we get
    \begin{align}
         \frac{\partial^2\Bar{g}}{\partial t^2}=\psi_t^*(\sigma''(t)g_0+2\sigma'(t)\frac{1}{\sigma(t)}\mc{L}_Yg_0+\frac{1}{\sigma(t)}\mc{L}_Y(\mc{L}_Yg_0)).
    \end{align}
    Since the geometry of the metric $g(t)$ is the same as that of $g_0$, we will concentrate on $g_0$. Put $\sigma''(0),\sigma'(0)~and~\sigma(0)$, so the above equation becomes,
    \begin{align}
        \frac{\partial^2\Bar{g}}{\partial t^2}=\psi_t^*(-2\mu g_0+2\lambda\mc{L}_Yg_0+\mc{L}_Y(\mc{L}_Yg_0)).
    \end{align}
    Using (\ref{ab03}) in above equation, we obtain
    \begin{align}
        \frac{\partial^2\Bar{g}}{\partial t^2}=\psi_t^*(-[2S+(p+\frac{2}{n})g_0])\nonumber\\
        =-[2S+(p+\frac{2}{n})\Bar{g}].
    \end{align}
    It shows that hyperbolic conformal Ricci soliton satisfies the hyperbolic conformal Ricci flow equation.
\end{proof}

\begin{theorem}
    If a $(2n+1)$-dimensional Lorentzian trans-Sasakian space form $(M, g,\varphi,\eta,\xi,c,\a,\b)$ with $\b\ne 0$ admits hyperbolic conformal $Ricci$ soliton, then $\lambda =\frac{1}{4\b}[2(n-1)(\a^2-\b^2)+(\mu-\frac{1}{2}(p+\frac{2}{2n+1})]-\b$. Furthermore, the soliton is shrinking, steady or expanding according as;
    $ \mu<2\{(n+1)\b^2-(n-1)\a^2\}+\frac{1}{2}(p+\frac{2}{2n+1})$, $\mu=2\{(n+1)\b^2-(n-1)\a^2\}+\frac{1}{2}(p+\frac{2}{2n+1})$ and $\mu>2\{(n+1)\b^2-(n-1)\a^2\}+\frac{1}{2}(p+\frac{2}{2n+1})$ respectively.
\end{theorem}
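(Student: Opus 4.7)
The plan is to mimic the proof of Theorem 4.1 almost verbatim, the only change being the extra conformal pressure term on the right-hand side of the soliton equation. I would take the potential vector field to be $V = \xi$, since for this choice the Lie derivative of $g$ along $\xi$ has the clean closed form already derived in Section 4. Specifically, from $\nabla_X \xi = \a \varphi X + \b(X - \eta(X)\xi)$ one obtains
\begin{equation*}
    (\mc{L}_\xi g)(X,Y) = 2\b(g(X,Y) - \eta(X)\eta(Y)),
\end{equation*}
and, by iterating the Lie derivative exactly as in equations (\ref{e02})--(\ref{e06}),
\begin{equation*}
    (\mc{L}_\xi(\mc{L}_\xi g))(X,Y) = 4\b^2(g(X,Y) - \eta(X)\eta(Y)).
\end{equation*}
These two identities are the only tensorial input required.

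Substituting them into the defining equation (\ref{eq006}) of a hyperbolic conformal Ricci soliton, with dimension $2n+1$ so that $\frac{2}{n}$ must be replaced by $\frac{2}{2n+1}$, and dividing through by $2$, I get
\begin{equation*}
    2\b^2(g(X,Y) - \eta(X)\eta(Y)) + 2\lambda\b(g(X,Y) - \eta(X)\eta(Y)) + S(X,Y) = \left(\mu - \tfrac{1}{2}\left(p + \tfrac{2}{2n+1}\right)\right) g(X,Y).
\end{equation*}
This already exhibits $S$ as a linear combination of $g$ and $\eta\otimes\eta$, hence $M$ is $\eta$-Einstein, which in passing matches the phenomenon observed in Theorem 4.1.

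Next I would set $Y = \xi$ and use $g(X,\xi) = -\eta(X)$ together with $\eta(\xi) = 1$, which yields
\begin{equation*}
    S(X,\xi) = \left(4\b^2 + 4\lambda\b - \mu + \tfrac{1}{2}\left(p + \tfrac{2}{2n+1}\right)\right)\eta(X).
\end{equation*}
Independently, specializing the Ricci formula from Corollary \ref{cr1} to $Z=\xi$ (and using $\varphi\xi = 0$ to kill the $2\a\b\, g(Y,\varphi Z)$ term) produces the contracted identity $S(X,\xi) = 2(n-1)(\a^2 - \b^2)\eta(X)$. Equating the two expressions for $S(X,\xi)$ and dividing through by $4\b$, which is nonzero by hypothesis, delivers the claimed formula for $\lambda$.

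The trichotomy for expanding/steady/shrinking is then pure algebra: according to Definition \ref{de01}, the soliton is expanding, steady or shrinking exactly when $\lambda$ is positive, zero or negative, and from the explicit formula $\lambda = \frac{1}{4\b}[2(n-1)(\a^2-\b^2) + (\mu - \tfrac{1}{2}(p+\tfrac{2}{2n+1}))] - \b$ this sign coincides with the sign of $\mu - \frac{1}{2}(p + \frac{2}{2n+1}) - 2\{(n+1)\b^2 - (n-1)\a^2\}$, yielding the stated thresholds. The step I would be most careful with is the bookkeeping of the dimension factor, since the soliton definition is stated for a generic $n$-manifold whereas here $\dim M = 2n+1$; once this replacement is made the computation is formally identical to the non-conformal case.
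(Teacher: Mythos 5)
Your proposal is correct and follows essentially the same route as the paper: the authors also take $V=\xi$, reuse the identities $(\mc{L}_\xi g)(X,Y)=2\b(g(X,Y)-\eta(X)\eta(Y))$ and $(\mc{L}_\xi(\mc{L}_\xi g))(X,Y)=4\b^2(g(X,Y)-\eta(X)\eta(Y))$ from Section 4, read off the $\eta$-Einstein structure, evaluate at $Y=\xi$ against $S(X,\xi)=2(n-1)(\a^2-\b^2)\eta(X)$, and divide by $4\b$. The only (shared) caveat is that the final sign trichotomy tacitly assumes $\b>0$ when passing from the sign of $\lambda$ to the stated inequalities for $\mu$, an assumption the paper makes implicitly as well.
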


\begin{proof}
    Let $(M,g)$ be a $(2n+1)$ dimensional manifold, then from the definition of hyperbolic conformal Ricci soliton, we rewrite the equation as
\begin{equation}\label{e001}
    \frac{1}{2}\mc{L}_\xi(\mc{L}_\xi g)(X,Y)+\lambda\mc{L}_\xi g(X,Y)+S(X,Y)=(\mu-\frac{1}{2}(p+\frac{2}{2n+1})]) g(X,Y).
\end{equation}
Inserting (\ref{e01}) and (\ref{e06}) in (\ref{e001}), we have
\begin{equation}\label{e002}
    S(X,Y)=(\mu-\frac{1}{2}(p+\frac{2}{2n+1})-2\b^2-2\lambda\b) g(X,Y)+2\b(\b+\lambda)\eta(X)\eta(Y).
\end{equation}
So, we see that $M$ is an $\eta-$ Einstein manifold.\\
We substituted $Y=\xi$, we get
$$2(n-1)(\a^2-\b^2)\eta(X)=\{4\b(\b+\lambda)-(\mu-\frac{1}{2}(p+\frac{2}{2n+1})\}\eta(X).$$
Hence, we have
$$\lambda=\frac{1}{4\b}[2(n-1)(\a^2-\b^2)+(\mu-\frac{1}{2}(p+\frac{2}{2n+1})]-\b.$$
The  soliton is expanding, if
\begin{equation}
    \mu>2\{(n+1)\b^2-(n-1)\a^2\}+\frac{1}{2}(p+\frac{2}{2n+1}),
\end{equation}
 steady, if
\begin{equation}
      \mu=2\{(n+1)\b^2-(n-1)\a^2\}+\frac{1}{2}(p+\frac{2}{2n+1}),
\end{equation}
and shrinking, if
\begin{equation}
      \mu<2\{(n+1)\b^2-(n-1)\a^2\}+\frac{1}{2}(p+\frac{2}{2n+1}).
\end{equation}

\end{proof}

\section{Acknowledgments} The author Bidhan Mondal thanks the University Grants Commission Junior Research Fellowship (Id No. 231610077944) for their financial assistance.

\section{Data Availability} Not applicable
\section{Declarations}
\textbf{Compliance with Ethical Standards} Not Applicable

\end{document}